\providecommand{\U}[1]{\protect\rule{.1in}{.1in}}
\newtheorem{theorem}{Theorem}
\theoremstyle{plain}
\newtheorem{definition}{Definition}
\newtheorem{example}{Example}
\newtheorem{lemma}{Lemma}
\newtheorem{proposition}{Proposition}
\numberwithin{equation}{section}
\begin{document}
\title{New extensions of Popoviciu's inequality}
\author{Marcela V. Mihai}
\address{Romanian Mathematical Society, Academy Street no. 14, RO-010014, Bucharest, Romania.}
\email{mmihai58@yahoo.com}
\author{Flavia-Corina Mitroi-Symeonidis}
\address{Faculty of Engineering Sciences, University of South-East Europe - LUMINA,
\c{S}os. Colentina 64b, Bucharest, RO-021187, Romania}
\email{fcmitroi@yahoo.com }
\subjclass[2000]{Primary 26A51; Secondary 54C60, 39B62}
\keywords{Popoviciu's inequality, convex function, quasi-arithmetic mean}
\dedicatory{Dedicated to the memory of T. Popoviciu.}
\begin{abstract}
Popoviciu's inequality is extended to the framework of $h$-convexity and also
to convexity with respect to a pair of quasi-arithmetic means. Several
applications are included.

\end{abstract}
\maketitle

\section{Introduction}

Fifty years ago Tiberiu Popoviciu \cite{Pop1965} published the following
striking characterization of convex functions:

\begin{theorem}
\label{thmPop}A real-valued continuous function $f$ defined on an interval $I$
is convex if and only if it verifies the inequality%
\begin{multline}
\frac{f(x)+f(y)+f(z)}{3}+f\left(  \frac{x+y+z}{3}\right) \tag{$Pop$}\\
\qquad\geq\frac{2}{3}\left(  f\left(  \frac{x+y}{2}\right)  +f\left(
\frac{y+z}{2}\right)  +f\left(  \frac{z+x}{2}\right)  \right) \nonumber
\end{multline}
whenever $x,y,z\in I$.
\end{theorem}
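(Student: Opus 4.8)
The plan is to treat the two implications separately, writing $a=\frac{x+y}{2}$, $b=\frac{y+z}{2}$, $c=\frac{z+x}{2}$ and $m=\frac{x+y+z}{3}$, and noting that after clearing denominators $(Pop)$ reads
\[
f(x)+f(y)+f(z)+3f(m)\ \ge\ 2\bigl(f(a)+f(b)+f(c)\bigr).
\]
Both sides are symmetric in $x,y,z$, so throughout I may assume $x\le y\le z$.

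For necessity ($f$ convex $\Rightarrow(Pop)$) I would recognise the displayed inequality as an instance of the Hardy--Littlewood--P\'olya majorization inequality $\sum f(p_i)\ge\sum f(q_i)$ applied to the $6$-tuples $\mathbf p=(x,y,z,m,m,m)$ and $\mathbf q=(a,a,b,b,c,c)$. These have equal sum $2(x+y+z)$, and one checks at once that $a\le m\le b$ together with $a\le c\le b$; hence it remains only to verify that $\mathbf p$ majorizes $\mathbf q$, i.e. that the decreasing partial sums of $\mathbf p$ dominate those of $\mathbf q$. This is where the one genuine computation sits: the sign of $2y-x-z$ (equivalently, whether the centroid $m$ lies to the left or right of the median $y$, which simultaneously fixes the order of $m$ against $c$) splits the check into two short cases, each reducing to elementary inequalities among $x,y,z$. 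Once majorization is confirmed, convexity of $f$ closes this direction.

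For sufficiency I would exploit a degenerate choice of variables. Setting $y=z=:v$ and $x=:u$ in $(Pop)$ and simplifying collapses it to
\[
f(u)+3f\!\left(\tfrac{u+2v}{3}\right)\ \ge\ 4f\!\left(\tfrac{u+v}{2}\right).
\]
Reparametrising by $w=\frac{u+2v}{3}$, so that $\frac{u+v}{2}=\frac14 u+\frac34 w$, turns this into the single Jensen inequality
\[
f\!\left(\tfrac14 u+\tfrac34 w\right)\ \le\ \tfrac14 f(u)+\tfrac34 f(w)\qquad\text{for all }u,w\in I;
\]
that is, $f$ is convex at the one fixed ratio $\tfrac14$. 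It then remains to upgrade this, together with continuity, to full convexity. I would fix $u<w$ and set $g(t)=f\bigl((1-t)u+tw\bigr)-\bigl[(1-t)f(u)+tf(w)\bigr]$ on $[0,1]$: it is continuous, $g(0)=g(1)=0$, and the inequality above is precisely $\tfrac14$-convexity of $g$. A maximum-principle argument then forces $g\le 0$: if $g$ attained a positive maximum $M>0$ at some interior $t_0$, then writing $t_0=\tfrac14\cdot 0+\tfrac34\,t$ (when $t_0\le\tfrac34$) or $t_0=\tfrac14\,s+\tfrac34\cdot 1$ (when $t_0\ge\tfrac34$), each keeping the auxiliary point in $[0,1]$, and applying $\tfrac14$-convexity yields $M\le\tfrac34 M$ or $M\le\tfrac14 M$, a contradiction. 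Hence $g\le 0$, so $f$ lies below every chord and is convex.

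I expect the main obstacle to be the necessity direction: the majorization is intuitively clear (``averages are less spread out than the data''), but making it rigorous needs the careful case split on the position of the centroid relative to the median and the attendant partial-sum bookkeeping. Sufficiency, by contrast, is essentially forced once one hits on the substitution $y=z$; the only subtlety is realising that convexity at the single ratio $\tfrac14$ already suffices, which continuity renders harmless through the maximum-principle lemma.
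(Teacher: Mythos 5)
You attempt both directions, which is appropriate since the paper itself does not prove Theorem~\ref{thmPop} (it is quoted from \cite{Pop1965}); the natural benchmark is the classical argument, whose necessity half reappears in the paper as the proof of the $h$-convex analogue (Theorem~\ref{thmhPop}). Your necessity direction is sound: with $x\le y\le z$ one does have $a\le c\le b$ and $a\le m\le b$, and the deferred partial-sum checks for $(x,y,z,m,m,m)\succ(a,a,b,b,c,c)$ go through in both cases $x+z\ge 2y$ and $x+z<2y$ (the only nontrivial comparisons reduce exactly to the sign of $x+z-2y$). This is an honest repackaging, via Hardy--Littlewood--P\'olya, of the convex-combination argument the paper uses for Theorem~\ref{thmhPop}; it buys a cleaner bookkeeping at the price of invoking majorization.

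The sufficiency direction, however, has a genuine gap at the reparametrization step. From $(Pop)$ with $x=u$, $y=z=v$ you correctly get $f(u)+3f\bigl(\tfrac{u+2v}{3}\bigr)\ge 4f\bigl(\tfrac{u+v}{2}\bigr)$ for all $u,v\in I$; but to read this as $f\bigl(\tfrac14 u+\tfrac34 w\bigr)\le\tfrac14 f(u)+\tfrac34 f(w)$ at \emph{prescribed} $u,w$ you must solve $w=\tfrac{u+2v}{3}$ for $v=\tfrac{3w-u}{2}=w+\tfrac{w-u}{2}$, a point that overshoots $w$ by half the gap and need not lie in $I$. So the ratio-$\tfrac14$ inequality is established only for pairs with $\tfrac{3w-u}{2}\in I$, not ``for all $u,w\in I$'' as you claim. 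Your maximum-principle step genuinely needs the missing pairs: e.g.\ the branch $t_0=\tfrac14 s+\tfrac34\cdot 1$ uses the pair $(p,w_0)$ with $w_0$ the right end of the chord, whose auxiliary point $\tfrac{3w_0-p}{2}>w_0$ leaves $I$ whenever the chord reaches the right endpoint of a compact $I$. The repair is standard: keep the inequality in the form $f\bigl(\tfrac{u+v}{2}\bigr)\le\tfrac14 f(u)+\tfrac34 f\bigl(\tfrac{u+2v}{3}\bigr)$, in which all three points lie between $u$ and $v$, derive its mirror image $f\bigl(\tfrac{u+v}{2}\bigr)\le\tfrac14 f(v)+\tfrac34 f\bigl(\tfrac{2u+v}{3}\bigr)$ from the substitution $x=y$, and at a maximum point $c$ of $g$ on the chord $[u_0,w_0]$ apply whichever version keeps $2c-u_0$ (resp.\ $2c-w_0$) inside the chord; one then gets $M\le\tfrac34 M$ with every point used staying in $[u_0,w_0]$. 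Alternatively, note that your argument does prove convexity on every chord sitting well inside $I$, and local convexity of a continuous function on an interval implies convexity. As written, though, the step ``for all $u,w\in I$'' is unjustified and the proof is incomplete.
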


He also noticed that inequality ($Pop$) has higher order analogues for each
finite string of points (of length greater than or equal to 3). In
\cite{Pop1965}, only the unweighted case was discussed, but Popoviciu's
argument covers the weighted case as well.

Popoviciu's result has received a great deal of attention and many
improvements and extensions have been obtained. The interested reader may
consult the books of Mitrinovi\'{c} \cite{Mit1970}, Niculescu and Persson
\cite{NPop2006} and Pe\v{c}ari\'{c}, Proschan and Tong \cite{PPT}, as well as
the recent papers by Niculescu and his collaborators \cite{BNP2010},
\cite{KNP}, \cite{N2009}, \cite{NPop2006}, \cite{NPop2006a}, \cite{NR2015A},
\cite{NR2015MIA} and \cite{NSt}.

Two easy extensions of Popoviciu's inequality that escaped unnoticed refer to
the case of convex functions with values in a Banach lattice and that of
semiconvex functions (i.e., of the functions that become convex after the
addition of a suitable smooth function).\ Using the phenomenon of
semiconvexity one can state a Popoviciu type inequality for all functions of
class $C^{2}:$

\begin{proposition}
\label{propscPop}Suppose that $f\in C^{2}\left(  \left[  a,b\right]  \right)
$ and put
\[
M=\sup\left\{  f^{\prime\prime}(x):x\in\lbrack a,b]\right\}  \text{ and
}m=\inf\left\{  f^{\prime\prime}(x):x\in\lbrack a,b]\right\}  .
\]
Then%
\begin{multline*}
\frac{M}{36}\left(  (x-y)^{2}+(y-z)^{2}+(z-x)^{2}\right)  \geq\\
\frac{f(x)+f(y)+f(z)}{3}+f\left(  \frac{x+y+z}{3}\right)  -\frac{2}{3}\left(
f\left(  \frac{x+y}{2}\right)  +f\left(  \frac{y+z}{2}\right)  +f\left(
\frac{z+x}{2}\right)  \right) \\
\geq\frac{m}{36}\left(  (x-y)^{2}+(y-z)^{2}+(z-x)^{2}\right)
\end{multline*}
for all $x,y,z\in\lbrack a,b]$.
\end{proposition}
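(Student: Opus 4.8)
The plan is to exploit the linearity of the Popoviciu functional together with the semiconvexity phenomenon already mentioned in the text. Write
\[
P(g;x,y,z)=\frac{g(x)+g(y)+g(z)}{3}+g\!\left(\frac{x+y+z}{3}\right)-\frac{2}{3}\left(g\!\left(\frac{x+y}{2}\right)+g\!\left(\frac{y+z}{2}\right)+g\!\left(\frac{z+x}{2}\right)\right)
\]
for the difference between the left- and right-hand sides of ($Pop$) applied to a function $g$. Since each term enters linearly, the map $g\mapsto P(g;x,y,z)$ is linear, and Theorem \ref{thmPop} says precisely that $P(g;x,y,z)\geq 0$ for every convex $g$ and all $x,y,z\in I$.

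First I would evaluate the functional on the quadratic $q(t)=t^{2}$. A direct expansion, writing $S=x^{2}+y^{2}+z^{2}$ and $T=xy+yz+zx$, collapses the six terms to
\[
P(q;x,y,z)=\frac{S-T}{9}=\frac{1}{18}\left((x-y)^{2}+(y-z)^{2}+(z-x)^{2}\right),
\]
where the last equality uses $S-T=\tfrac12\big((x-y)^{2}+(y-z)^{2}+(z-x)^{2}\big)$. This is the only genuine computation in the argument, and it is entirely routine; one can shorten it further by first checking that $P$ annihilates affine functions (so only the coefficient of $t^{2}$ matters).

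Next I would introduce the two auxiliary functions
\[
g(t)=f(t)-\frac{m}{2}t^{2},\qquad h(t)=\frac{M}{2}t^{2}-f(t).
\]
Because $m\leq f''\leq M$ on $[a,b]$, we have $g''=f''-m\geq 0$ and $h''=M-f''\geq 0$, so both $g$ and $h$ are convex on $[a,b]$. Applying Theorem \ref{thmPop} to each, and using the linearity of $P$ together with the value of $P(q;\cdot)$ just computed, gives
\[
P(f;x,y,z)-\frac{m}{2}P(q;x,y,z)=P(g;x,y,z)\geq 0
\]
and
\[
\frac{M}{2}P(q;x,y,z)-P(f;x,y,z)=P(h;x,y,z)\geq 0.
\]
Rearranging these two inequalities and substituting $P(q;x,y,z)=\tfrac{1}{18}\big((x-y)^{2}+(y-z)^{2}+(z-x)^{2}\big)$ yields exactly the claimed two-sided bound with constants $m/36$ and $M/36$. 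No step should present real difficulty; the only mild obstacle is the bookkeeping in the quadratic computation, which the affine-invariance remark renders almost immediate.
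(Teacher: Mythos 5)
Your argument is correct and coincides with the paper's own proof, which likewise applies Theorem \ref{thmPop} to the convex functions $\frac{M}{2}t^{2}-f(t)$ and $f(t)-\frac{m}{2}t^{2}$ and reads off the constants from the value of the Popoviciu functional on $t^{2}$. The paper merely compresses this into one sentence; your computation of $P(q;x,y,z)=\frac{1}{18}\bigl((x-y)^{2}+(y-z)^{2}+(z-x)^{2}\bigr)$ is the same routine step made explicit.
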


Indeed, under the assumptions of Proposition \ref{propscPop}, both functions
$\frac{M}{2}x^{2}-f(x)$ and $f(x)-\frac{m}{2}x^{2}$ are convex and Theorem
\ref{thmPop} applies. The variant of Proposition \ref{propscPop} for strongly
convex functions, that is for those functions $f$ such that $f-\frac{C}%
{2}x^{2}$ is convex for a suitable $C>0)$ can be deduced in the same manner:%
\begin{multline*}
\frac{f(x)+f(y)+f(z)}{3}+f\left(  \frac{x+y+z}{3}\right)  -\frac{2}{3}\left(
f\left(  \frac{x+y}{2}\right)  +f\left(  \frac{y+z}{2}\right)  +f\left(
\frac{z+x}{2}\right)  \right) \\
\geq\frac{C}{36}\left(  (x-y)^{2}+(y-z)^{2}+(z-x)^{2}\right)  .
\end{multline*}

Since $e^{x}\geq\frac{1}{2}x^{2}$ for $x\geq0,$ this fact yields the
inequality
\[
\frac{a+b+c}{3}+\sqrt[3]{abc}-\frac{2}{3}\left(  \sqrt{ab}+\sqrt{bc}+\sqrt
{ca}\right)  \geq\frac{1}{36}\left(  \log^{2}\frac{a}{b}+\log^{2}\frac{b}%
{c}+\log^{2}\frac{c}{a}\right)  ,
\]
for all $a,b,c\geq1.$

The aim of the present paper is to discuss Popoviciu's inequality in the
context of generalized convexity.

The next section deals with the case of convexity with respect to a pair of
means. See Definition \ref{defMNconv} below for details. Theorem
\ref{thmPopcvasiar} states the analogue of Popoviciu's inequality in the
context of quasi-arithmetic means, and its usefulness is illustrated by the
case of the hypergeometric function and the volume function of the unit ball
in $L^{P}$ spaces of dimension $n.$ A counter-example shows that we cannot
expect a full extension of Popoviciu's inequality to the case of arbitrary
convex functions with respect to a pair of means.

Section 3 deals with the case of $h$-convex functions in the sense of
Varo\v{s}anec \cite{V2007}. We end our paper by noticing the availability of
Popoviciu's inequality in the general framework of $h$-Jensen pairs of functions.

\section{The case of convex functions relative to a pair of means}

Convexity relative to a pair of means was first considered by Aumann
\cite{A1933} in 1933, but its serious investigation started not until the 90s.
By a \emph{mean} on an interval $I$ we understand any function $M:I\times
I\rightarrow\mathbb{R}$ such that%
\[
\min\left\{  x,y\right\}  \leq M(x,y)\leq\max\left\{  x,y\right\}
\]
for all $x,y\in I$. The most used class of means is that of quasi-arithmetic
means, which are associated to a continuous and strictly monotonic function
$\varphi:I\rightarrow\mathbb{R}$ by the formula
\[
\mathfrak{M}_{\varphi}(x,y)=\varphi^{-1}\left(  \frac{\varphi(x)+\varphi
(y)}{2}\right)  ,\text{ for }x,y\in I.
\]
A particular case is that of power means of order $p\in\mathbb{R},$%
\[
M_{p}(x,y)=\left\{
\begin{array}
[c]{rl}%
\min\left\{  x,y\right\}  & \text{if }p=-\infty\\
\left(  \frac{x^{p}+y^{p}}{2}\right)  ^{1/p} & \text{if }p\neq0\\
\sqrt{xy} & \text{if }p=0\\
\max\left\{  x,y\right\}  & \text{if }p=\infty,
\end{array}
\right.
\]
which corresponds to the function $\varphi(x)=x^{p}$, if $p\in\mathbb{R}%
\backslash\left\{  0\right\}  $ and $\varphi(x)=\log x$, if $p=0.$ Notice
that
\begin{align*}
M_{-1}  &  =H\text{ (the harmonic mean)}\\
M_{0}  &  =G\text{ (the geometric mean)}\\
M_{1}  &  =A\text{ (the arithmetic mean).}%
\end{align*}

Remarkably, the quasi-arithmetic means $\mathfrak{M}_{\varphi}$ admit natural
extensions to the case of an arbitrary finite family of points $x_{1}%
,...,x_{n}$ endowed with weights $\lambda_{1},...,\lambda_{n}$ of total mass
1,
\[
\mathfrak{M}_{\varphi}(x_{1},...,x_{n};\lambda_{1},...,\lambda_{n}%
)=\varphi^{-1}\left(  \sum_{k=1}^{n}\lambda_{k}\varphi(x_{k})\right)  .
\]

In order to simplify the notation, we put $\mathfrak{M}_{\varphi}%
(x_{1},...,x_{n};1/n,...,1/n)=\mathfrak{M}_{\varphi}(x_{1},...,x_{n}).$

\begin{definition}
\label{defMNconv}Given a pair of intervals $I$ and $J$ endowed respectively
with the means $M$ and $N,$ a function $f:I\rightarrow J$ is called
$(M,N)$-\emph{convex} if it is continuous and%
\begin{equation}
f\left(  M(x,y)\right)  \leq N\left(  f(x),f(y)\right)  \text{\quad for all
}x,y\in I. \tag{(M,N)}%
\end{equation}

\end{definition}

The analogue of Jensen's inequality works in the case of $\left(
\mathfrak{M}_{\varphi},\mathfrak{M}_{\psi}\right)  $-convex functions, so that
for such functions we have%
\[
f\left(  \mathfrak{M}_{\varphi}(x_{1},...,x_{n};\lambda_{1},...,\lambda
_{n})\right)  \leq\mathfrak{M}_{\psi}\left(  f(x_{1}),...,f(x_{n});\lambda
_{1},...,\lambda_{n}\right)
\]
for all $x_{1},...,x_{n}\in I$ and $\lambda_{1},...,\lambda_{n}\in\lbrack0,1]$
with $\sum\lambda_{k}=1.$

Clearly, the usual convex functions represent the case of $\left(  A,A\right)
$-convex functions, while the log-convex functions are the same with $\left(
A,G\right)  $-convex functions.

The importance and significance of other classes of generalized convex
functions such as of $\left(  G,A\right)  $-convex functions, $\left(
G,G\right)  $-convex functions, $(H,A)$-convex functions etc. is discussed in
the book \cite{NP2006} and the paper of Anderson, M.K. Vamanamurthy, M.
Vuorinen \cite{AVV}.

Not all important means are quasi-arithmetic. Two examples are the
\emph{logarithmic mean},%
\[
L\left(  a,b\right)  =\left\{
\begin{array}
[c]{cr}%
\frac{a-b}{\ln a-\ln b} & \text{if }a\neq b\\
a & \text{if }a=b
\end{array}
\right.
\]
and the \emph{identric mean,}%
\[
I\left(  a,b\right)  =\left\{
\begin{array}
[c]{cl}%
\frac{1}{e}\left(  \frac{b^{b}}{a^{a}}\right)  ^{\frac{1}{b-a}} & \text{if
}a\neq b\\
a & \text{if }a=b.
\end{array}
\right.
\]

The theory of $\left(  \mathfrak{M}_{\varphi},\mathfrak{M}_{\psi}\right)
$-convex functions can be deduced from the theory of usual convex functions.

\begin{lemma}
\label{ThmAczel}\emph{(J. }Aczel \emph{\cite{Ac1947})}. Let $\varphi\text{ and
}\psi$ be two strictly monotonic functions defined respectively on the
intervals $I$ and $J$, and let $f:I\rightarrow J$ be an arbitrary function.

If $\psi$ is strictly increasing, then $f$ is $\left(  \mathfrak{M}_{\varphi
},\mathfrak{M}_{\psi}\right)  $-convex/concave if and only if $\psi\circ
f\circ\varphi^{-1}$ is convex/concave on $\varphi\left(  I\right)  $ in the
usual sense.

If $\psi$ is strictly decreasing, then $f$ is $\left(  \mathfrak{M}_{\varphi
},\mathfrak{M}_{\psi}\right)  $-convex/concave if and only if $\psi\circ
f\circ\varphi^{-1}$ is concave/convex on $\varphi\left(  I\right)  $ in the
usual sense.
\end{lemma}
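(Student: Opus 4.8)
The plan is to reduce the mean-convexity inequality to ordinary (midpoint) convexity by the change of variables dictated by the means themselves. Writing out Definition \ref{defMNconv} with $M=\mathfrak{M}_{\varphi}$ and $N=\mathfrak{M}_{\psi}$, the condition that $f$ be $\left(\mathfrak{M}_{\varphi},\mathfrak{M}_{\psi}\right)$-convex reads
\[
f\Bigl(\varphi^{-1}\Bigl(\tfrac{\varphi(x)+\varphi(y)}{2}\Bigr)\Bigr)\le \psi^{-1}\Bigl(\tfrac{\psi(f(x))+\psi(f(y))}{2}\Bigr)\qquad(x,y\in I),
\]
together with continuity of $f$. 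I would set $g=\psi\circ f\circ\varphi^{-1}$ on the interval $\varphi(I)$ (an interval because $\varphi$ is continuous and strictly monotonic), and substitute $u=\varphi(x)$, $v=\varphi(y)$, which range freely over $\varphi(I)$.

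First I would apply $\psi$ to both sides of the displayed inequality. The left-hand side becomes $\psi\bigl(f(\varphi^{-1}((u+v)/2))\bigr)=g\bigl((u+v)/2\bigr)$, while the right-hand side collapses to $\bigl(\psi(f(x))+\psi(f(y))\bigr)/2=\bigl(g(u)+g(v)\bigr)/2$. If $\psi$ is strictly increasing, applying $\psi$ preserves the inequality, so the mean-convexity condition is exactly the midpoint convexity $g\bigl((u+v)/2\bigr)\le\bigl(g(u)+g(v)\bigr)/2$ on $\varphi(I)$; if $\psi$ is strictly decreasing, the inequality reverses, yielding midpoint concavity of $g$. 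The concave version of the hypothesis is handled identically, merely flipping the sense of the inequality throughout, which accounts for all four cases in the statement.

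It remains to upgrade midpoint convexity to genuine convexity, and this is where continuity enters decisively. Because $\varphi^{-1}$ and $\psi$ are continuous (as inverses and compositions of continuous strictly monotonic maps on intervals), $g$ is continuous precisely when $f$ is; the continuity built into Definition \ref{defMNconv} therefore transfers to $g$. A continuous midpoint-convex function on an interval is convex in the usual sense (a classical fact), and conversely every convex function is midpoint convex and continuous on the interior of its domain. Running the equivalence in both directions then gives the claim. The one point deserving care—and the only real obstacle—is this midpoint-to-full-convexity step: it fails without a regularity hypothesis, so one must genuinely invoke the continuity clause rather than treat it as cosmetic. The algebraic translation of the preceding paragraph is otherwise a direct substitution.
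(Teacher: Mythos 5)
Your argument is correct, and it is the standard proof of Acz\'el's result; note, however, that the paper itself states this lemma as a quotation from \cite{Ac1947} and supplies no proof of its own, so there is nothing in the paper to compare against. The change of variables $u=\varphi(x)$, $v=\varphi(y)$, the application of $\psi$ (reversing the inequality when $\psi$ is decreasing), and the passage from midpoint convexity to convexity via continuity are exactly the right steps, and you are right to stress that the continuity clause in Definition \ref{defMNconv} is what makes the midpoint-to-full upgrade legitimate. The only point you leave implicit is the converse direction at endpoints: a convex function on a non-open interval can be discontinuous at an endpoint, so strictly speaking $g=\psi\circ f\circ\varphi^{-1}$ convex yields continuity of $f$ only on the interior of $I$; this is a standard and harmless caveat, customarily absorbed by working with open intervals, but worth a sentence if you want the equivalence to be airtight as stated.
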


Theorem \ref{ThmAczel} yields the following extension of Popoviciu's inequality:

\begin{theorem}
\label{thmPopcvasiar}Suppose that $f:I\rightarrow J$ is an $\left(
\mathfrak{M}_{\varphi},\mathfrak{M}_{\psi}\right)  $-convex function. If
$\psi$ is strictly increasing, then
\begin{multline*}
\mathfrak{M}_{\psi}\left(  \mathfrak{M}_{\psi}\left(  f(x),f(y),f(z)\right)
,f\left(  \mathfrak{M}_{\varphi}\left(  x,y,z\right)  \right)  \right) \\
\geq\mathfrak{M}_{\psi}\left(  f\left(  \mathfrak{M}_{\varphi}\left(
x,y\right)  \right)  ,f\left(  \mathfrak{M}_{\varphi}\left(  y,z\right)
\right)  ,f\left(  \mathfrak{M}_{\varphi}\left(  z,x\right)  \right)  \right)
\end{multline*}
for all $x,y,z\in I$.

The inequality works in the reverse sense if the function $\psi$ is strictly decreasing.
\end{theorem}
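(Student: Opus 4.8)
The plan is to deduce the statement directly from the classical Popoviciu inequality (Theorem \ref{thmPop}) applied to the ordinary convex function attached to $f$ by Aczel's lemma. Assume first that $\psi$ is strictly increasing and set $g=\psi\circ f\circ\varphi^{-1}$, which by Lemma \ref{ThmAczel} is convex on the interval $\varphi(I)$. I would then introduce the three real arguments $u=\varphi(x)$, $v=\varphi(y)$, $w=\varphi(z)$, all lying in $\varphi(I)$, and apply Theorem \ref{thmPop} to $g$ at the triple $u,v,w$. The whole argument is a change of variables that turns the quasi-arithmetic means into ordinary arithmetic ones.

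The key computational step is to recognize that every quantity appearing in Popoviciu's inequality for $g$ is, after this translation, the $\psi$-image of one of the terms in the asserted inequality. Indeed, from $\mathfrak{M}_\varphi(\cdot)=\varphi^{-1}(\text{arithmetic mean of the }\varphi\text{-values})$ one reads off $g(u)=\psi(f(x))$, $g\big(\tfrac{u+v+w}{3}\big)=\psi\big(f(\mathfrak{M}_\varphi(x,y,z))\big)$, $g\big(\tfrac{u+v}{2}\big)=\psi\big(f(\mathfrak{M}_\varphi(x,y))\big)$, and similarly for the other midpoints. Since $\psi\big(\mathfrak{M}_\psi(a,b,c)\big)$ and $\psi\big(\mathfrak{M}_\psi(a,b)\big)$ are by definition the equally weighted arithmetic means of the $\psi$-values, applying $\psi$ to the two sides of the desired inequality reproduces exactly one half of the two sides of Popoviciu's inequality for $g$: the nested left-hand mean yields $\tfrac12\big[\tfrac13\sum\psi(f(\cdot))+\psi(f(\mathfrak{M}_\varphi(x,y,z)))\big]$, while the right-hand mean yields $\tfrac13\sum\psi(f(\mathfrak{M}_\varphi(\cdot,\cdot)))=\tfrac12\cdot\tfrac23\sum\psi(f(\mathfrak{M}_\varphi(\cdot,\cdot)))$. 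Hence Popoviciu's inequality for $g$ is literally the assertion $\psi(\mathrm{LHS})\ge\psi(\mathrm{RHS})$, and because $\psi^{-1}$ is increasing the claimed inequality follows at once.

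For the decreasing case I would run the same reduction, and this is where I expect the only genuine difficulty, namely the bookkeeping of signs. Two reversals now compete. By Lemma \ref{ThmAczel} the associated function $g$ is now \emph{concave} rather than convex, so Popoviciu's inequality for $g$ holds in the reverse sense and gives $\psi(\mathrm{LHS})\le\psi(\mathrm{RHS})$; but $\psi^{-1}$ is now order-reversing, so passing from the $\psi$-images back to the original values flips the inequality a second time. Because the net effect of composing these two reversals is easy to misjudge, I would track the direction explicitly rather than by analogy with the increasing case, and I would confirm it on a concrete test, such as an $(A,H)$-convex function (arising from $\psi(t)=1/t$), before asserting in which sense the final inequality points.
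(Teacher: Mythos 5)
For the case of strictly increasing $\psi$ your argument is correct and is precisely the proof given in the paper: pass to $g=\psi\circ f\circ\varphi^{-1}$ via Lemma \ref{ThmAczel}, apply Theorem \ref{thmPop} to $g$ at the points $\varphi(x),\varphi(y),\varphi(z)$, recognize each resulting term as the $\psi$-image of a term of the asserted inequality, and finish by applying the increasing map $\psi^{-1}$.

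Your hesitation about the decreasing case is well founded, and you should carry that bookkeeping through, because it does not come out the way the statement asserts. When $\psi$ is strictly decreasing, $g$ is concave by Lemma \ref{ThmAczel}, so Popoviciu reverses and gives $\psi(\mathrm{LHS})\leq\psi(\mathrm{RHS})$; but $\psi^{-1}$ is also decreasing, so applying it reverses the inequality a second time and you land back at $\mathrm{LHS}\geq\mathrm{RHS}$ --- the \emph{same} sense as in the increasing case, not the reverse sense claimed in the theorem. (This is consistent with the generalized Jensen inequality quoted earlier in the paper, whose direction likewise does not depend on the monotonicity of $\psi$.) The paper's own proof stops at the increasing case and offers no argument for the reversal. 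The concrete test you propose confirms this: $f(x)=1/(1-x^{2})$ on $(-1,1)$ is $(A,H)$-convex because $1/f=1-x^{2}$ is concave, and for $x=-1/2$, $y=0$, $z=1/2$ the left-hand nested harmonic mean is $H\left(H\left(4/3,1,4/3\right),1\right)=12/11\approx1.091$ while the right-hand mean is $H\left(16/15,16/15,1\right)=24/23\approx1.043$, so the inequality again points in the $\geq$ direction. In short: your proof of the main assertion is complete and matches the paper; finishing the sign-tracking you postponed shows that the ``reverse sense'' clause (and the direction stated in the hypergeometric example that relies on it) needs to be corrected rather than proved.
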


\begin{proof}
By Lemma \ref{ThmAczel} the function $\psi\circ f\circ\varphi^{-1}$ is convex
on the interval $\varphi(I)$ so that one can apply Popoviciu's inequality to
it relative to the points $a=\varphi(x),$ $b=\varphi(y)$ and $c=\varphi(z).$
Then%
\begin{multline*}
\frac{\left(  \psi\circ f\circ\varphi^{-1}\right)  (\varphi(x))+\left(
\psi\circ f\circ\varphi^{-1}\right)  (\varphi(y))+\left(  \psi\circ
f\circ\varphi^{-1}\right)  (\varphi(z))}{3}\\
+\left(  \psi\circ f\circ\varphi^{-1}\right)  \left(  \frac{\varphi
(x)+\varphi(y)+\varphi(z)}{3}\right) \\
\qquad\geq\frac{2}{3}\left(  \left(  \psi\circ f\circ\varphi^{-1}\right)
\left(  \frac{\varphi(x)+\varphi(y)}{2}\right)  +\left(  \psi\circ
f\circ\varphi^{-1}\right)  \left(  \frac{\varphi(y)+\varphi(z)}{2}\right)
\right. \\
+\left.  \left(  \psi\circ f\circ\varphi^{-1}\right)  \left(  \frac
{\varphi(z)+\varphi(x)}{2}\right)  \right)  ,
\end{multline*}
that is,%
\begin{multline*}
\frac{1}{2}\left(  \frac{\psi\left(  f(x)\right)  +\psi\left(  f(y)\right)
+\psi\left(  f(z)\right)  }{3}+\psi\left(  f\left(  \mathfrak{M}_{\varphi
}\left(  x,y,z\right)  \right)  \right)  \right) \\
\geq\frac{\psi\left(  f\left(  \mathfrak{M}_{\varphi}\left(  x,y\right)
\right)  \right)  +\psi\left(  f\left(  \mathfrak{M}_{\varphi}\left(
y,z\right)  \right)  \right)  +\psi\left(  f\left(  \mathfrak{M}_{\varphi
}\left(  z,x\right)  \right)  \right)  }{3}.
\end{multline*}
On the other hand,%
\begin{align*}
\frac{\psi\left(  f(x)\right)  +\psi\left(  f(y)\right)  +\psi\left(
f(z)\right)  }{3}  &  =\psi\left(  \psi^{-1}\left(  \frac{\psi\left(
f(x)\right)  +\psi\left(  f(y)\right)  +\psi\left(  f(z)\right)  }{3}\right)
\right) \\
&  =\psi\left(  \mathfrak{M}_{\psi}\left(  f(x),f(y),f(z)\right)  \right)  ,
\end{align*}
and the proof ends by applying $\psi^{-1}$ to both sides.
\end{proof}

This result can be extended to the case of an arbitrary finite family of
points and weighted quasi-arithmetic means, but the details are tedious and
will be omitted.

\begin{example}
The Gaussian hypergeometric function (of parameters $a,b,c>0$) is defined via
the formula%
\[
F(x)=_{2}F_{1}(x;a,b,c)=\sum_{n=0}^{\infty}\frac{\left(  a,n\right)
(b,n)}{(c,n)n!}x^{n}\,\quad\text{for }\left\vert x\right\vert <1,
\]
where $(a,n)=a(a+1)\cdots(a+n-1)$ if $n\geq1$ and $(a,0)=1$. Anderson,
Vamanamurthy and Vuorinen \cite{AVV} proved that if $a+b\geq c>2ab$ and $c\geq
a+b-1/2$, then the function $1/F(x)$ is concave on $(0,1)$. This implies%
\[
F\left(  \frac{x+y}{2}\right)  \leq\frac{1}{\frac{1}{2}\left(  \frac{1}%
{F(x)}+\frac{1}{F(y)}\right)  }\text{\quad for all }x,y\in(0,1),
\]
whence it follows that the hypergeometric function is $\left(  A,H\right)
$-convex. Taking into account that the harmonic mean is a quasi-arithmetic
mean corresponding to the strictly decreasing function $\frac{1}{x},$ we infer
that $F$ verifies the following analogue of Popoviciu's inequality:
\[
\frac{1}{\frac{1}{2}\left(  \frac{1}{3}\left(  \frac{1}{F(x)}+\frac{1}%
{F(y)}+\frac{1}{F(z)}\right)  +\frac{1}{F\left(  \frac{x+y+z}{3}\right)
}\right)  }\leq\frac{1}{\frac{1}{3}\left(  \frac{1}{F\left(  \frac{x+y}%
{2}\right)  }+\frac{1}{F\left(  \frac{y+z}{2}\right)  }+\frac{1}{F\left(
\frac{z+x}{2}\right)  }\right)  },
\]
equivalently,%
\begin{multline*}
\frac{1}{2}\left(  \frac{1}{3}\left(  \frac{1}{F(x)}+\frac{1}{F(y)}+\frac
{1}{F(z)}\right)  +\frac{1}{F\left(  \frac{x+y+z}{3}\right)  }\right) \\
\geq\frac{1}{3}\left(  \frac{1}{F\left(  \frac{x+y}{2}\right)  }+\frac
{1}{F\left(  \frac{y+z}{2}\right)  }+\frac{1}{F\left(  \frac{z+x}{2}\right)
}\right)  .
\end{multline*}

\end{example}

\begin{example}
D. Borwein, J. Borwein, G. Fee and R. Girgensohn \cite{BBFG} proved that the
volume $V_{n}(p)$ of the convex body $\mathcal{E}=\left\{  x\in\mathbb{R}%
^{n}:\left\Vert x\right\Vert _{p}\leq1\right\}  $ is an $(H,G)$-concave
function on $[1,\infty)$. More precisely, given $\alpha>1,$ the function
\[
V_{\alpha}(p)=2^{\alpha}\frac{\Gamma^{\alpha}(1+1/p)}{\Gamma\left(
1+\alpha/p\right)  }%
\]
verifies the inequality%
\[
V_{\alpha}^{1-\lambda}(p)V_{\alpha}^{\lambda}(q)\leq V_{\alpha}\left(
\frac{1}{\frac{1-\lambda}{p}+\frac{\lambda}{q}}\right)
\]
for all $p,q>0$ and $\lambda\in\lbrack0,1].$\ In this case, Popoviciu's
inequality becomes%
\begin{multline*}
\sqrt{\sqrt[3]{V_{\alpha}\left(  p\right)  V_{\alpha}\left(  q\right)
V_{\alpha}\left(  r\right)  }\cdot V_{\alpha}\left(  \dfrac{1}{\frac{1}%
{3}\left(  \frac{1}{p}+\frac{1}{q}+\frac{1}{r}\right)  }\right)  }\\
\geq\sqrt[3]{V_{\alpha}\left(  \dfrac{1}{\frac{1}{2}\left(  \frac{1}{p}%
+\frac{1}{q}\right)  }\right)  \cdot V_{\alpha}\left(  \dfrac{1}{\frac{1}%
{2}\left(  \frac{1}{q}+\frac{1}{r}\right)  }\right)  \cdot V_{\alpha}\left(
\dfrac{1}{\frac{1}{2}\left(  \frac{1}{r}+\frac{1}{p}\right)  }\right)  }.
\end{multline*}

\end{example}

A natural question is whether Popoviciu's inequality works for an arbitrary
$(M,N)$-convex function.

We shall see that the answer is negative. Indeed, the log-convex functions are
also $\left(  A,L\right)  $-convex, because they verify the inequalities%
\begin{multline*}
f\left(  \frac{a+b}{2}\right) \\
\leq\exp\left(  \frac{1}{b-a}\int_{a\,}^{b}\log\,f(x)\,\mathrm{d}x\right)
\leq\frac{1}{b-a}\,\int_{a}^{b}\,f(x)\,\mathrm{d}x\leq L(f(a),f(b))\\
\leq\frac{f(a)+f(b)}{2}.
\end{multline*}
See \cite{N2012}.

The logarithmic mean was extended to the case of an arbitrary finite family of
points by Neuman in his paper \cite{EdN1994}. An argument that Neuman's
extension is the "right" one can be found in \cite{N2012}. For triplets, the
logarithmic mean is given by the formula%
\[
L(a,b,c)=\frac{2a}{\log\frac{a}{b}\log\frac{a}{c}}+\frac{2b}{\log\frac{b}%
{a}\log\frac{b}{c}}+\frac{2c}{\log\frac{c}{a}\log\frac{c}{b}}.
\]

The analogue of Popoviciu's inequality in the case of $\left(  A,L\right)
$-convex functions should be
\begin{multline*}
\frac{L\left(  f(x),f(y),f(z)\right)  -f\left(  \frac{x+y+z}{3}\right)  }{\log
L\left(  f(x),f(y),f(z)\right)  -\log f\left(  \frac{x+y+z}{3}\right)  }\\
\geq L\left(  f\left(  \frac{x+y}{2}\right)  ,f\left(  \frac{y+z}{2}\right)
,f\left(  \frac{z+x}{2}\right)  \right)  ,
\end{multline*}
for all $x,y,z$ belonging to the domain of $f.$ However this does not work
even in the case of the Gamma function,
\[
\Gamma(x)=\int_{0}^{\infty}t^{x-1}e^{-t}dt,\quad x>0,
\]
that is known to be log-convex (see \cite{NP2006}, Theorem 2.2.1, pp. 68-69).
The Gamma function has a minimum at 1.461632..., so we will search around this point.

Put%
\begin{multline*}
E(x;y;z)=\frac{L\left(  \Gamma(x),\Gamma(y),\Gamma(z)\right)  -\Gamma\left(
\frac{x+y+z}{3}\right)  }{\log L\left(  \Gamma(x),\Gamma(y),\Gamma(z)\right)
-\log\Gamma\left(  \frac{x+y+z}{3}\right)  }\\
-L\left(  \Gamma\left(  \frac{x+y}{2}\right)  ,\Gamma\left(  \frac{y+z}%
{2}\right)  ,\Gamma\left(  \frac{z+x}{2}\right)  \right)
\end{multline*}
for $x,y,z>0.$ A simple computation shows that%
\[
E(1.40;1.46;1.47)=65.92090117-108.64<0
\]
while%
\[
E(0.30;0.34;0.35)=2.711369453-2.709270>0.
\]
Therefore Popoviciu's inequality does not always work for $\left(  M,N\right)
$-convex functions.

\section{The case of h-convex functions}

In 2007, Varo\v{s}anec \cite{V2007} introduced a class of generalized convex
functions that brings together several important classes of functions.

In order to enter into the details we have to fix a function
$h:(0,1)\rightarrow(0,\infty)$ such that
\begin{equation}
h(1-\lambda)+h(\lambda)\geq1\text{ for all }\lambda\in(0,1). \label{h1}%
\end{equation}
As above, $I$ will denote an interval.

\begin{definition}
\label{defhconv}A function $f:I\rightarrow\mathbb{R}$ is called $h$-convex if%
\[
f\left(  \left(  1-\lambda\right)  x+\lambda y\right)  \leq h(1-\lambda
)f(x)+h(\lambda)f(y)
\]
for all $x,y\in I$ and $\lambda\in(0,1).$
\end{definition}

The role of the condition (\ref{h1}) is to assure that the function
identically 1 is $h$-convex.

The usual convex functions represent the particular case of Definition
\ref{defhconv}, where $h$ is the identity function$.$

The $h$-convex functions corresponding to the case $h(\lambda)=\lambda^{s}$
(for a suitable $s\in(0,1])$ are the $s$-\emph{convex functions} in the sense
of Breckner \cite{B1978}. Their systematic study can be found in the papers of
Hudzik and Maligranda \cite{HM94} and Pinheiro \cite{P2007}.

An example of an $s$-convex function (for $0<s<1)$ is given by the formula%
\[
f(t)=\left\{
\begin{array}
[c]{cl}%
a & \text{if }t=0\\
bt^{s}+c & \text{if }t>0
\end{array}
\right.
\]
where $b\geq0$ and $0\leq c\leq a.$ In particular, the function $t^{s}$ is
$s$-convex on $[0,\infty)$ if $0<s<1.$

The nonnegative $h$-convex functions corresponding to the case $h(\lambda
)=\frac{1}{\lambda}$ are the \emph{convex functions in the sense of
Godunova-Levin} \cite{GL1985}. They verify the inequality%
\[
f\left(  \left(  1-\lambda\right)  x+\lambda y\right)  \leq\frac
{f(x)}{1-\lambda}+\frac{f(y)}{\lambda}%
\]
for all $x,y\in I$ and $\lambda\in(0,1).$ Every nonnegative monotonic function
(as well as every nonnegative convex function) is convex in the sense of Godunova-Levin.

The $h$-convex functions corresponding to the case $h(\lambda)\equiv1$ are
the\emph{ }$P$-\emph{convex functions} in the sense of\emph{ }Dragomir,
Pe\v{c}ari\'{c} and Persson \cite{DPP}. They verify inequalities of the form%
\[
f\left(  \left(  1-\lambda\right)  x+\lambda y\right)  \leq f(x)+f(y)
\]
for all $x,y\in I$ and $\lambda\in(0,1).$

One can state the following analogue of Popoviciu's inequality in the case of
$h$-convex functions.

\begin{theorem}
\label{thmhPop}If $h$ is concave, then every nonnegative $h$-convex function
$f\colon I\rightarrow\mathbb{R}$ verifies the inequality
\begin{multline}
\max\left\{  h\left(  1/2\right)  ,2h(1/4)\right\}  \left(
f(x)+f(y)+f(z)\right)  +2h(3/4)f\left(  \frac{x+y+z}{3}\right) \tag{$hPop$}\\
\qquad\geq f\left(  \frac{x+y}{2}\right)  +f\left(  \frac{y+z}{2}\right)
+f\left(  \frac{z+x}{2}\right) \nonumber
\end{multline}
for all $x,y,z\in I$.
\end{theorem}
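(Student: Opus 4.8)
The plan is to reduce the statement to the two–point inequality defining $h$–convexity, used together with the concavity of $h$, after decomposing the three midpoints in a very particular ``balanced'' way. First I would exploit the symmetry of $(hPop)$ in $x,y,z$ to assume $x\le y\le z$, and put $m=\frac{x+y+z}{3}\in I$. An elementary computation gives $\frac{x+y}{2}\le m\le\frac{y+z}{2}$ for every such triple, so the natural dichotomy is $y\ge m$ versus $y\le m$ (equivalently $y\ge\frac{x+z}{2}$ or not). These two cases are exchanged by the reflection $x\leftrightarrow z$, hence it is enough to treat $y\ge m$.

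Assume then $y\ge m$. Here both $\frac{x+y}{2}$ and $\frac{x+z}{2}$ lie in the segment $[x,m]$, so each is a genuine convex combination, say $\frac{x+y}{2}=(1-a)x+am$ and $\frac{x+z}{2}=(1-c)x+cm$ with $a,c\in(0,1)$. The step I expect to be the real crux is to notice the point–independent identity $a+c=\frac{3}{2}$; it follows at once after computing $a=\frac{3(y-x)}{2(y+z-2x)}$ and $c=\frac{3(z-x)}{2(y+z-2x)}$. Discovering exactly this decomposition --- in which the two weights carried by $m$ average to $\frac34$ while the two weights carried by $x$ average to $\frac14$ --- is what forces the constants $2h(3/4)$ and $2h(1/4)$ to appear, and is the whole point of the argument.

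With the decomposition in hand the remainder is routine. Applying the defining inequality of $h$–convexity to the two combinations above, and the plain midpoint bound $f\!\left(\frac{y+z}{2}\right)\le h(1/2)\bigl(f(y)+f(z)\bigr)$ to the third midpoint, and then summing, I would read off that $f(m)$ carries the coefficient $h(a)+h(c)$ and $f(x)$ the coefficient $h(1-a)+h(1-c)$, while $f(y)$ and $f(z)$ carry $h(1/2)$. Now the concavity of $h$, in the form $h(u)+h(v)\le 2h\!\left(\frac{u+v}{2}\right)$, finishes everything: since $a+c=\frac32$ it yields $h(a)+h(c)\le 2h(3/4)$, and since $(1-a)+(1-c)=\frac12$ it yields $h(1-a)+h(1-c)\le 2h(1/4)$. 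Because $f\ge 0$, I may then enlarge every vertex coefficient to $\max\{h(1/2),2h(1/4)\}$, leaving the coefficient of $f(m)$ bounded by $2h(3/4)$, which is precisely $(hPop)$; the presence of the maximum reflects exactly that the vertex $x$ receives $2h(1/4)$ while $y,z$ receive $h(1/2)$. The only configuration needing separate care is $y=m$, where $\frac{x+z}{2}=m$ and $c=1$ escapes the open domain of $h$; there I would bound all three midpoints by the plain midpoint inequality and close the estimate using the auxiliary inequality $2\bigl(h(1/4)+h(3/4)\bigr)\ge 1+2h(1/2)$, itself a consequence of the concavity of $h$ and of condition (\ref{h1}).
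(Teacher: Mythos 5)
Your proof is correct and is essentially the paper's own argument in mirror image: the paper assumes $x\le y\le z$ with $y\le\frac{x+y+z}{3}$, writes $\frac{x+z}{2}$ and $\frac{y+z}{2}$ as convex combinations of the mean and $z$ with weights satisfying $s+t=3/2$, uses concavity to get $h(s)+h(t)\le 2h(3/4)$ and $h(1-s)+h(1-t)\le 2h(1/4)$, and finishes with the same $\max$ enlargement. Your care with the boundary configuration $y=\frac{x+y+z}{3}$ actually goes beyond the paper (which tacitly allows $s,t\in\{0,1\}$, where $h$ is undefined); the only adjustment needed there is to keep $f\left(\frac{x+z}{2}\right)=f\left(\frac{x+y+z}{3}\right)$ exact rather than bounding it by $h(1/2)\left(f(x)+f(z)\right)$, after which your auxiliary inequality $2\left(h(1/4)+h(3/4)\right)\ge 1+2h(1/2)$ closes the case.
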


\begin{proof}
Without loss of generality we may assume that $x\leq y\leq z$. If
$y\leq(x+y+z)/3$, then
\[
(x+y+z)/3\leq(x+z)/2\leq z\quad\text{and}\quad(x+y+z)/3\leq(y+z)/2\leq z,
\]
which yields two numbers $s,t\in\lbrack0,1]$ such that
\begin{gather*}
\frac{x+z}{2}=s\cdot\frac{x+y+z}{3}+(1-s)\cdot z\\
\frac{y+z}{2}=t\cdot\frac{x+y+z}{3}+(1-t)\cdot z.
\end{gather*}
Summing up, we get $(x+y-2z)(s+t-3/2)=0$. If $x+y-2z=0$, then necessarily
$x=y=z$, and the inequality $\left(  hPop\right)  $ is clear. If $s+t=3/2$, by
summing up the following three inequalities
\begin{align*}
f\left(  \frac{x+z}{2}\right)   &  \leq h(s)\cdot f\left(  \frac{x+y+z}%
{3}\right)  +h(1-s)\cdot f(z)\\
f\left(  \frac{y+z}{2}\right)   &  \leq h(t)\cdot f\left(  \frac{x+y+z}%
{3}\right)  +h(1-t)\cdot f(z)\\
f\left(  \frac{x+y}{2}\right)   &  \leq h\left(  1/2\right)  \cdot
f(x)+h\left(  1/2\right)  \cdot f(y).
\end{align*}
we get%
\begin{multline*}
f\left(  \frac{x+y}{2}\right)  +f\left(  \frac{y+z}{2}\right)  +f\left(
\frac{z+x}{2}\right) \\
\leq\left(  h(s)+h(t)\right)  \cdot f\left(  \frac{x+y+z}{3}\right) \\
+h\left(  1/2\right)  \cdot f(x)+h\left(  1/2\right)  \cdot f(y)+\left(
h(1-s)+h(1-t)\right)  \cdot f(z)\\
\leq h\left(  1/2\right)  \cdot f(x)+h\left(  1/2\right)  \cdot
f(y)+2h(1/4)\cdot f(z)+2h(3/4)f\left(  \frac{x+y+z}{3}\right) \\
\leq\max\left\{  h\left(  1/2\right)  ,2h(1/4)\right\}  \left(
f(x)+f(y)+f(z)\right)  +2h(3/4)f\left(  \frac{x+y+z}{3}\right)  ,
\end{multline*}
and the inequality $\left(  hPop\right)  $ is also clear.

The case where $(x+y+z)/3<y$ can be treated in a similar way.
\end{proof}

As an application of Theorem \ref{thmhPop} let us consider the case of the
function $t^{1/2}$ (which is $s$-convex for $s=1/2).$ Then $h(t)=t^{1/2},$
$\max\left\{  h\left(  1/2\right)  ,2h(1/4)\right\}  =1$ and $2h(3/4)=\sqrt
{3},$ which yields%
\begin{multline*}
x^{1/2}+y^{1/2}+z^{1/2}+\sqrt{3}\left(  \frac{x+y+z}{3}\right)  ^{1/2}\\
\geq\left(  \frac{x+y}{2}\right)  ^{1/2}+\left(  \frac{y+z}{2}\right)
^{1/2}+\left(  \frac{z+x}{2}\right)  ^{1/2}%
\end{multline*}
for all $x,y,z\geq0.$

We end our paper with another Popoviciu type inequality for $h$-convex functions.

The basic ingredient is the Jensen-type inequality for the $h$-convex
functions,
\[
f\left(  \frac{x_{1}+\cdots+x_{n}}{n}\right)  \leq h\left(  \frac{1}%
{n}\right)  \left(  f(x_{1})+\cdots+f(x_{n})\right)
\]
valid for arbitrary finite strings of points $x_{1},...,x_{n}$ under the
additional hypothesis that $h$ is supermultiplicative in the sense that
$h(xy)\geq h(x)h(y)$ for all $x,y.$ See \cite{V2007}, Theorem 19. When $f$ is
$h$-concave and $h$ is submultiplicative, the Jensen inequality takes the form%
\[
f\left(  \frac{x_{1}+\cdots+x_{n}}{n}\right)  \geq h\left(  \frac{1}%
{n}\right)  \left(  f(x_{1})+\cdots+f(x_{n})\right)  .
\]

\begin{theorem}
$i)$ If $h$ is supermultiplicative, with $h(1/3)<1,$ and $f:I\rightarrow
\mathbb{R}$ is an $h$-convex function, then
\begin{multline*}
f(x)+f(y)+f(z)-f\left(  \frac{x+y+z}{3}\right) \\
\geq\frac{1-h\left(  1/3\right)  }{2h\left(  1/2\right)  }\left(  f\left(
\frac{x+y}{2}\right)  +f\left(  \frac{y+z}{2}\right)  +f\left(  \frac{z+x}%
{2}\right)  \right)
\end{multline*}
for all $x,y,z\in I.$

$ii)$ If $h$ is submultiplicative, with $h(1/3)>1,$ and $f:I\rightarrow
\mathbb{R}$ is an $h$-concave function, then
\begin{multline*}
f\left(  \frac{x+y+z}{3}\right)  -\left(  f(x)+f(y)+f(z)\right) \\
\geq\frac{h\left(  1/3\right)  -1}{2h\left(  1/2\right)  }\left(  f\left(
\frac{x+y}{2}\right)  +f\left(  \frac{y+z}{2}\right)  +f\left(  \frac{z+x}%
{2}\right)  \right)  .
\end{multline*}

\end{theorem}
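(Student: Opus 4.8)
The plan is to deduce both inequalities from two elementary facts, exploiting the identity
\[
\frac{1}{3}\left(\frac{x+y}{2}+\frac{y+z}{2}+\frac{z+x}{2}\right)=\frac{x+y+z}{3},
\]
which exhibits the barycenter $\frac{x+y+z}{3}$ as the unweighted arithmetic mean of the three edge midpoints. I abbreviate $S=f(x)+f(y)+f(z)$, $C=f\!\left(\frac{x+y+z}{3}\right)$ and $T=f\!\left(\frac{x+y}{2}\right)+f\!\left(\frac{y+z}{2}\right)+f\!\left(\frac{z+x}{2}\right)$. The first fact comes from writing the defining inequality with $\lambda=1/2$ for each of the three pairs and adding; as each vertex lies in exactly two midpoints, this gives $T\le 2h(1/2)\,S$ when $f$ is $h$-convex and $T\ge 2h(1/2)\,S$ when $f$ is $h$-concave. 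The second fact is the three-point Jensen-type inequality recalled just before the statement (this is available for nonnegative $f$, so that $S,T\ge 0$ hold throughout).

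For part $i)$ I would apply the Jensen-type inequality to the vertices: supermultiplicativity gives $C\le h(1/3)\,S$, so $S-C\ge\bigl(1-h(1/3)\bigr)S$. Since $h(1/3)<1$ the factor $1-h(1/3)$ is positive, and multiplying the rearranged first fact $S\ge T/\bigl(2h(1/2)\bigr)$ by it preserves the inequality, yielding
\[
S-C\ge\frac{1-h(1/3)}{2h(1/2)}\,T,
\]
which is precisely the claim of part $i)$. This step multiplies a valid inequality by a positive constant, so it is insensitive to the sizes of $S$ and $T$.

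Part $ii)$ cannot be obtained by mirroring this computation, and that is the main obstacle. The concave form of the first fact reads $S\le T/\bigl(2h(1/2)\bigr)$, which is the reverse of what one would need in order to multiply by the positive factor $h(1/3)-1$. The way around this is to apply the Jensen-type inequality not to the vertices but to the three midpoints---legitimate because their mean is $\frac{x+y+z}{3}$ by the displayed identity---so that submultiplicativity gives $C\ge h(1/3)\,T$. Combining with $-S\ge -T/\bigl(2h(1/2)\bigr)$ produces
\[
C-S\ge\left(h(1/3)-\frac{1}{2h(1/2)}\right)T.
\]

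Finally I would compare the coefficient just obtained with the target one; this is where the remaining hypotheses enter. A one-line computation gives
\[
\left(h(1/3)-\frac{1}{2h(1/2)}\right)-\frac{h(1/3)-1}{2h(1/2)}=h(1/3)\cdot\frac{2h(1/2)-1}{2h(1/2)},
\]
which is nonnegative exactly because the normalization \eqref{h1} at $\lambda=1/2$ forces $2h(1/2)\ge 1$. Hence the coefficient found dominates $\frac{h(1/3)-1}{2h(1/2)}$, and since $T\ge 0$ the larger coefficient may be lowered to the target value, completing part $ii)$. In short, the delicate point is not any individual estimate but the asymmetry between the two parts: the clean mirror of $i)$ is unavailable, so one must route Jensen through the midpoints and then lean on both $2h(1/2)\ge 1$ and $T\ge 0$.
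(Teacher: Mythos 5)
Your part $i)$ is correct and is essentially the paper's own argument: with your notation $S=f(x)+f(y)+f(z)$, $C=f\bigl(\frac{x+y+z}{3}\bigr)$, $T=f\bigl(\frac{x+y}{2}\bigr)+f\bigl(\frac{y+z}{2}\bigr)+f\bigl(\frac{z+x}{2}\bigr)$, both proofs rest on exactly the two facts $T\le 2h(1/2)\,S$ (summing the three midpoint inequalities) and $C\le h(1/3)\,S$ (Jensen at the vertices), combined using the positivity of $1-h(1/3)$; only the algebraic arrangement differs.

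Part $ii)$ has a genuine gap at the final step. Your intermediate inequality $C-S\ge\bigl(h(1/3)-\tfrac{1}{2h(1/2)}\bigr)T$ is correctly derived, and routing Jensen through the three midpoints is a genuinely different idea from the paper, whose own proof of $ii)$ is precisely the ``mirror of $i)$'' that you rightly identify as problematic. But lowering the coefficient from $\alpha=h(1/3)-\tfrac{1}{2h(1/2)}$ to $\beta=\tfrac{h(1/3)-1}{2h(1/2)}$ requires $T\ge 0$, and in the regime of part $ii)$ this is exactly what fails. Putting $y=x$ and $\lambda=1/3$ in the $h$-concavity inequality gives $f(x)\ge\bigl(h(1/3)+h(2/3)\bigr)f(x)$; since $h>0$ and $h(1/3)>1$ the factor $h(1/3)+h(2/3)$ exceeds $1$, which forces $f(x)\le 0$ for every $x$. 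So every $h$-concave function under the hypotheses of $ii)$ is nonpositive (the only nonnegative one is $f\equiv 0$, for which the statement is vacuous), hence $T\le 0$; and for $T\le 0$ the comparison $\alpha\ge\beta$ gives $\alpha T\le\beta T$, so your bound $C-S\ge\alpha T$ is \emph{weaker} than the claimed $C-S\ge\beta T$, not stronger. The blanket assumption ``$S,T\ge 0$ throughout,'' imported from the nonnegativity convention for $h$-convex functions, cannot be carried into the $h$-concave setting of part $ii)$, and the normalization $2h(1/2)\ge1$ from condition (\ref{h1}) does not rescue the step.
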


\begin{proof}
$i)$ In this case,
\begin{multline*}
f\left(  \frac{x+y}{2}\right)  +f\left(  \frac{y+z}{2}\right)  +f\left(
\frac{z+x}{2}\right)  \leq2h\left(  1/2\right)  \left(  f(x)+f(y)+f(z)\right)
\\
=\frac{2h\left(  1/2\right)  }{1-h\left(  1/3\right)  }\left(
f(x)+f(y)+f(z)\right)  -\frac{2h\left(  1/2\right)  }{1-h\left(  1/3\right)
}h\left(  1/3\right)  \left(  f(x)+f(y)+f(z)\right) \\
\leq\frac{2h\left(  1/2\right)  }{1-h\left(  1/3\right)  }\left(
f(x)+f(y)+f(z)\right)  -\frac{2h\left(  1/2\right)  }{1-h\left(  1/3\right)
}f\left(  \frac{x+y+z}{3}\right) \\
=\frac{2h\left(  1/2\right)  }{1-h\left(  1/3\right)  }\left(
f(x)+f(y)+f(z)-f\left(  \frac{x+y+z}{3}\right)  \right)  .
\end{multline*}

$ii)$ Similarly,%
\begin{multline*}
f\left(  \frac{x+y}{2}\right)  +f\left(  \frac{y+z}{2}\right)  +f\left(
\frac{z+x}{2}\right)  \geq2h\left(  1/2\right)  \left(  f(x)+f(y)+f(z)\right)
\\
=\frac{2h\left(  1/2\right)  h\left(  1/3\right)  }{h\left(  1/3\right)
-1}\left(  f(x)+f(y)+f(z)\right)  -\frac{2h\left(  1/2\right)  }{h\left(
1/3\right)  -1}\left(  f(x)+f(y)+f(z)\right) \\
\geq\frac{2h\left(  1/2\right)  }{h\left(  1/3\right)  -1}\left(  f\left(
\frac{x+y+z}{3}\right)  -\left(  f(x)+f(y)+f(z)\right)  \right)  .
\end{multline*}

\end{proof}

As an application of Theorem \ref{thmhPop} let us consider the case of the
function $t^{1/2}$ (which is $s$-convex for $s=1/2).$ Then $h(t)=t^{1/2}$ and
$h(1/3)=\left(  1/3\right)  ^{1/2}=\allowbreak0.577...<1.$ Therefore%
\begin{multline*}
x^{1/2}+y^{1/2}+z^{1/2}-\left(  \frac{x+y+z}{3}\right)  ^{1/2}\\
\geq\frac{1-\left(  1/3\right)  ^{1/2}}{2\left(  1/2\right)  ^{1/2}}\left[
\left(  \frac{x+y}{2}\right)  ^{1/2}+\left(  \frac{y+z}{2}\right)
^{1/2}+\left(  \frac{z+x}{2}\right)  ^{1/2}\right]
\end{multline*}
for all $x,y,z\geq0.$

Last but not least it is worth noticing that Popoviciu's inequality still
works in the more general context of $h$-Jensen pairs $(f,g)$. These pairs are
aimed to satisfy inequalities of the form
\[
f\left(  (1-\lambda)x+\lambda y\right)  \leq h(1-\lambda)g(x)+h(\lambda)g(y),
\]
for all $x,y\in I$ and $\lambda\in(0,1);$ here $I$ is a common domain of $f$
and $g.$ An inspection of the argument of Theorem \ref{thmhPop} easily yields
the following result.

\begin{theorem}
Let $h$ be concave and $(f,g)$ be an $h$-Jensen pair of positive functions
$f,g\colon I\rightarrow\mathbb{R}$. Then a Popoviciu type inequality holds:%
\begin{multline*}
\max\left\{  h\left(  1/2\right)  ,2h(1/4)\right\}  \left(
g(x)+g(y)+g(z)\right)  +2h(3/4)g\left(  \frac{x+y+z}{3}\right) \\
\qquad\geq f\left(  \frac{x+y}{2}\right)  +f\left(  \frac{y+z}{2}\right)
+f\left(  \frac{z+x}{2}\right)  .
\end{multline*}

\end{theorem}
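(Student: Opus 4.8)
The plan is to transplant the argument of Theorem \ref{thmhPop} essentially verbatim, the only change being that each time convexity of $f$ is invoked it is replaced by the defining inequality of the $h$-Jensen pair, so that $g$ (rather than $f$) appears on the right-hand side of every estimate. First I would reduce to the case $x\le y\le z$, and then split according to whether $y\le(x+y+z)/3$ or $y>(x+y+z)/3$; the two subcases are symmetric, so it suffices to treat the first.

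In that subcase the ordering $(x+y+z)/3\le(x+z)/2\le z$ and $(x+y+z)/3\le(y+z)/2\le z$ produces $s,t\in[0,1]$ with $\frac{x+z}{2}=s\,\frac{x+y+z}{3}+(1-s)z$ and $\frac{y+z}{2}=t\,\frac{x+y+z}{3}+(1-t)z$; adding these gives $(x+y-2z)(s+t-3/2)=0$. When $x+y-2z\ne0$ this forces $s+t=3/2$, and when $x+y-2z=0$ (which under $x\le y\le z$ means $x=y=z$) I am free to simply take $s=t=3/4$, which also satisfies $s+t=3/2$. Thus in every case I may assume $s+t=3/2$.

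Next I would apply the $h$-Jensen pair inequality to each of the three midpoints $\frac{x+z}{2},\frac{y+z}{2},\frac{x+y}{2}$, using respectively the weights $(1-s,s)$ on the pair $\big(\frac{x+y+z}{3},z\big)$, the weights $(1-t,t)$ on $\big(\frac{x+y+z}{3},z\big)$, and the weights $(1/2,1/2)$ on $(x,y)$. This yields three upper bounds in which only values of $g$ occur on the right. Summing them, the coefficient of $g\big(\frac{x+y+z}{3}\big)$ is $h(s)+h(t)$, the coefficients of $g(x)$ and $g(y)$ are both $h(1/2)$, and the coefficient of $g(z)$ is $h(1-s)+h(1-t)$. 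Here concavity of $h$ enters: since $s+t=3/2$ has midpoint $3/4$, we get $h(s)+h(t)\le 2h(3/4)$, and since $(1-s)+(1-t)=1/2$ has midpoint $1/4$, we get $h(1-s)+h(1-t)\le 2h(1/4)$.

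After these two reductions the right-hand side reads $2h(3/4)g\big(\frac{x+y+z}{3}\big)+h(1/2)g(x)+h(1/2)g(y)+2h(1/4)g(z)$, and a final pass—bounding each of the coefficients $h(1/2),h(1/2),2h(1/4)$ by $\max\{h(1/2),2h(1/4)\}$ and using $g\ge0$—delivers exactly the asserted inequality. I do not expect a genuine obstacle: the computation is routine once the replacement of $g$ for $f$ on every right-hand side is carried out consistently. The only points demanding care are the bookkeeping in the degenerate case $x=y=z$, where one must observe that the choice $s=t=3/4$ keeps the three pair-inequalities valid (they reduce to instances of the pair hypothesis with $x=y$, namely $f\le(h(1/4)+h(3/4))g$ and $f\le 2h(1/2)g$), so that no separate treatment is needed; and the final passage to the maximum, which is precisely where positivity of $g$ is used.
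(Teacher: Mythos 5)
Your proof is correct and follows exactly the route the paper intends: the paper gives no separate argument for this theorem, saying only that it follows by inspecting the proof of Theorem \ref{thmhPop}, and your write-up is precisely that inspection carried out, with $g$ substituted on every right-hand side and positivity of $g$ invoked at the final passage to the maximum. Your choice $s=t=3/4$ in the degenerate case $x=y=z$ is, if anything, slightly cleaner than the paper's separate treatment of that case, since it lets the general chain of estimates apply verbatim.
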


\end{document}